\documentclass[11pt]{amsart}
\usepackage{graphicx}
\usepackage{enumerate}
\usepackage{amsmath, amsfonts, amsthm}
\usepackage{amssymb}
\usepackage{shuffle}
\usepackage{color}
\usepackage{todonotes}
\presetkeys%
    {todonotes}%
    {inline,backgroundcolor=yellow}{}
\numberwithin{equation}{section}

\newcommand{\comm}[1]{}
\newtheorem{theorem}{Theorem}
\newtheorem{definition}[theorem]{Definition}
\newtheorem{lemma}[theorem]{Lemma}
\newtheorem{question}[theorem]{Question}
\newtheorem{remark}[theorem]{Remark}
\newtheorem{proposition}[theorem]{Proposition}

\numberwithin{theorem}{section}

\newtheorem{thm}[equation]{Theorem}
\newtheorem{cor}[equation]{Corollary}
\newtheorem{lem}[equation]{Lemma}

\theoremstyle{remark}

\DeclareMathOperator{\Isom}{Isom}

\DeclareMathOperator{\diam}{diam}

\newcommand{\N}{\mathbb{N}}

\newcommand{\Ga}{\Gamma}

\newcommand{\HH}{\mathbb{H}}

\newcommand{\Stab}{\textup{Stab}}

\newcommand{\CAT}{\text{CAT}}
\newcommand{\Cay}{\text{Cay}}

\title{Actions on $\CAT(-1)$ spaces with critical exponent less than $1$}
\author{Beibei Liu and Shi Wang}
\date{\today}
\address{Department of Mathematics, The Ohio State University, 100 Math Tower, 231 W 18th Ave, Columbus, OH 43201, USA}
\email{liu.11302@osu.edu}

\address{Institute of Mathematical Sciences,
	ShanghaiTech University, Pudong, Shanghai, China}
\email{wangshi@shanghaitech.edu.cn}

\dedicatory{To the memory of Misha Kapovich}

\begin{document}
\maketitle
\begin{abstract} We show that for a discrete isometry subgroup acting on a proper $\CAT(-1)$ space $X$, if the critical exponent is less than $1$, then the critical exponent equals the Hausdorff dimension of the entire limit set. Consequently, the limit set must be a Cantor set. As an application, we prove that any finitely generated, torsion-free discrete subgroup in $\Isom(X)$ with critical exponent less than one must be geometrically finite and free. This answers a question of Kapovich.
\end{abstract}

\section{Introduction}
Let $X$ be a proper $\mathrm{CAT}(-1)$ space, and let
$\Gamma<\operatorname{Isom}(X)$ be a discrete subgroup. Fix a base point $o\in X$ and denote $d$ the distance function on $X$. The Poincar\'e series of the action is
defined by
\[
P_\Gamma(s,o)
   :=\sum_{\gamma\in\Gamma} e^{-s d(o,\gamma o)},\quad s\in (0,\infty),
\]
whose abscissa of convergence is called the critical exponent of the $\Ga$-action, denoted by
\[
\delta(\Gamma)
   :=\inf\bigl\{
      s>0 : P_\Gamma(s,o)<\infty
   \bigr\}.
\]
The value of $\delta(\Gamma)$ is independent of the choice of $o$.
Equivalently, it measures the exponential growth rate of the $\Ga$-orbits in $X$. The main purpose of this paper is to prove the following theorem.

\begin{thm}\label{thm:main}
Let $X$ be a proper $\mathrm{CAT}(-1)$ space, and let
$\Gamma<\operatorname{Isom}(X)$ be a finitely generated, non-elementary discrete
group. Suppose for some $\alpha\in(0,1]$ the Poincar\'e series
\[
P_\Gamma(\alpha,o)
   =
   \sum_{\gamma\in\Gamma}
      e^{-\alpha d(o,\gamma o)}
   <\infty.
\]
Then the limit set has Hausdorff measure
\[
\mathcal H^\alpha\bigl(\Lambda(\Gamma)\bigr)=0.
\]
In particular, $\Lambda(\Ga)$ is homeomorphic to the Cantor set and its Hausdorff dimension $$\dim_{\mathcal H}(\Lambda(\Ga))= \delta(\Ga).$$
\end{thm}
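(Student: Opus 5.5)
The plan is to prove $\mathcal H^\alpha(\Lambda(\Gamma))=0$ by building, for each large $T$, a cover of $\Lambda(\Gamma)$ by sets $U_\gamma$ indexed by group elements $\gamma$ with $d(o,\gamma o)\ge T$. Each $U_\gamma$ should have visual diameter $\lesssim e^{-d(o,\gamma o)}$, where the visual metric on $\partial X$ is based at $o$ with parameter $1$; this is legitimate in $\CAT(-1)$. Given such a cover,
\[
\mathcal H^\alpha_{\infty}\bigl(\Lambda(\Gamma)\bigr)\;\le\; C\sum_{d(o,\gamma o)\ge T} e^{-\alpha d(o,\gamma o)},
\]
which is a tail of the convergent series $P_\Gamma(\alpha,o)$ and so tends to $0$ as $T\to\infty$. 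The same argument at scale $e^{-T}$ gives $\mathcal H^\alpha=0$, not just zero content. For the conical limit set this is standard: a conical point lies in infinitely many shadows $\mathcal O_R(o,\gamma o)$, each of diameter $\le C_R e^{-d(o,\gamma o)}$. Bounded parabolic points and, more generally, any countable set contribute nothing, since $\alpha>0$. The real work is therefore the non-conical, non-countable part of $\Lambda(\Gamma)$, which can occur when $\Gamma$ is geometrically infinite. This is where finite generation must be used.

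For the general cover I would fix a finite symmetric generating set $S$ and put $L=\max_{s\in S}d(o,so)$. I would then define the ``exit set''
\[
W_T=\{\gamma s:\ d(o,\gamma o)<T\le d(o,\gamma s o),\ s\in S\}.
\]
Every $\gamma'$ with $d(o,\gamma' o)\ge T$ is reached from $e$ by an $S$-path whose orbit points move in jumps of size at most $L$, so such a path passes through $W_T$, and every element of $W_T$ satisfies $d(o,\gamma o)\in[T,T+L]$. For $w\in W_T$, let $U_w$ be the set of limit points of sequences $\gamma_k o$ for which the \emph{last} exit of the path to $\gamma_k$ across the level $T$ happens at $w$; equivalently, the whole tail of the path after $w$ stays outside the ball $B(o,T)$. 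These sets cover $\Lambda(\Gamma)$ by a pigeonhole argument over the finitely many choices of $w$ with $d(o,wo)\le T+L$, applied along each sequence. Iterating the construction over the levels $T, T+L, T+2L,\dots$ gives a tree structure on these sets.

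The main obstacle is the diameter bound $\diam U_w\lesssim e^{-T}$, and I do not have this in hand. A path that stays outside $B(o,T)$ can still wander around the sphere, so a naive bound fails. The approach I would try first is to use the $\CAT(-1)$ comparison to show that a chain of orbit points, each outside $B(o,T')$ and consecutive ones at distance $\le L$, moves in visual angle from $o$ by at most $\sim Le^{-T'}$ per step. This means the visual diameter of the points the chain reaches is controlled by a sum $\sum L e^{-d(o,\gamma o)}$ along the chain. I would then not cover each $U_w$ by a single set. Instead I would cover $\Lambda(\Gamma)$ by the balls of radius $\sim e^{-d(o,\gamma o)}$ about the visual projections of \emph{all} orbit points on these chains at all levels $\ge T$. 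Since $\alpha\le 1$, a chain of step sizes $r_i$ has total length $\sum r_i$, and each step $r_i$ is the radius of a covering ball, with $r_i^\alpha\ge r_i$ once $r_i\le 1$. Summing the $\alpha$-powers of these radii is dominated by the Poincaré tail, which gives the bound. This is exactly the place where the hypothesis $\alpha\le 1$ would enter the measure estimate, and I would expect the authors' argument to hinge on it.

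The remaining conclusions then follow quickly. First, a limit set with a nontrivial connected component contains a continuum $K$ with $\mathcal H^1(K)\ge \diam K>0$, hence $\mathcal H^\alpha(K)>0$ since $\alpha\le1$. This contradicts $\mathcal H^\alpha(\Lambda(\Gamma))=0$, so $\Lambda(\Gamma)$ is totally disconnected. Second, for non-elementary $\Gamma$ the limit set is perfect and compact, and a compact, perfect, totally disconnected metrizable space is homeomorphic to the Cantor set. Third, for the dimension statement, applying the first part with every $\alpha\in(\delta(\Gamma),1]$ gives $\dim_{\mathcal H}\Lambda(\Gamma)\le\delta(\Gamma)$. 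The reverse inequality holds because $\dim_{\mathcal H}\Lambda\ge\dim_{\mathcal H}\Lambda_c=\delta(\Gamma)$, the standard Bishop–Jones type result for $\CAT(-1)$ spaces.
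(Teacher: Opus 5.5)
\paragraph{There is a genuine gap, and it is exactly at the step you flagged.} Your fallback cover uses balls of radius about $e^{-d(o,\gamma o)}$ centred at the visual projections $\xi_\gamma$, with $d(o,\gamma o)\geq T$. This cover cannot reach the nonconical limit set. If $\rho_o(\lambda,\xi_\gamma)\leq Ce^{-d(o,\gamma o)}$, then $(\lambda,\xi_\gamma)_o\geq d(o,\gamma o)-\log C$. By thinness of the ideal triangle $(o,\lambda,\xi_\gamma)$, the ray $c_\lambda$ then passes within a distance $R(C,D)$ of $\gamma o$. So a point covered for every $T$ is conical, and this cover only recovers $\mathcal H^\alpha(\Lambda_c(\Ga))=0$. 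The same argument rules out any cover by sets attached to $\gamma$ of diameter comparable to $e^{-d(o,\gamma o)}$, so your first plan fails too. Also, $r_i^\alpha\geq r_i$ is a lower bound and is not how $\alpha\leq 1$ enters. Finally, a ``visual projection'' of $\gamma o$ is not available in a general $\CAT(-1)$ space, since geodesics need not extend. The paper instead chooses $o$ on a geodesic between two limit points $A,B$ and takes $\xi_\gamma\in\{\gamma A,\gamma B\}$.

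\paragraph{The missing idea.} Cover by sets whose diameter is bounded by the \emph{total} step length of all chains through them, and make the chains canonical so that each step is charged only once. The paper does this as follows.
\begin{enumerate}
\item It fixes a spanning tree $\mathcal T$ of $\Cay(\Ga,S)$ and proves $\rho_o(\xi_\gamma,\xi_{\gamma s})\leq e^{L+2D}e^{-d(o,\gamma o)}$.
\item Writing $\ell(e)=\rho_o(\xi_\gamma,\xi_{\gamma s})$ for the edge $e=[\gamma,\gamma s]$, it shows that the set of limits along rays in the subtree $\mathcal T_v$ has diameter at most $2\sum_{e\subset\mathcal T_v}\ell(e)$.
\item The hypothesis $\alpha\leq 1$ enters only through subadditivity, $\bigl(\sum_e\ell(e)\bigr)^\alpha\leq\sum_e\ell(e)^\alpha$.
\item The subtrees $\mathcal T_v$ with $|v|=m$ are pairwise disjoint, so $\sum_{|v|=m}\diam^\alpha$ is bounded by a tail of $|S|e^{\alpha(L+2D)}P_\Ga(\alpha,o)$.
\item These sets cover all of $\Lambda(\Ga)$, nonconical points included. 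This uses a K\"onig-lemma argument together with the fact that $\xi_{\gamma_i}\to\lambda$ whenever $\gamma_i o\to\lambda$.
\end{enumerate}

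\paragraph{Salvaging your exit-set idea.} Your construction would work once paths are taken in such a tree. There, the tails after distinct last exits $w\in W_T$ are disjoint, and $\diam U_w$ is bounded by $2e^{L+2D}|S|\sum e^{-d(o,uo)}$ over the tail vertices $u$. With arbitrary $S$-paths, different $U_w$ share edges, and $\sum_w(\diam U_w)^\alpha$ is no longer controlled by the Poincar\'e tail. Your deductions of total disconnectedness, the Cantor set conclusion and the dimension equality from $\mathcal H^\alpha(\Lambda(\Ga))=0$ are fine and match the paper.
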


\begin{remark}
The Hausdorff measure/dimension are associated with the visual metric $\rho_o$ on $\Lambda(\Ga)\subset \partial_\infty X$. (See Section \ref{sec:geometry}.) Changing the basepoint yields a bi-Lipschitz equivalent visual metric, hence the vanishing of
$\mathcal H^\alpha(\Lambda(\Gamma))$, and the Hausdorff
dimension of $\Lambda(\Gamma)$, are both independent of the choice of the
basepoint.
\end{remark}

As an application, using Bowditch's structural results of groups acting on Cantor sets \cite{BowditchCantor}, we obtain the following.

\begin{thm}\label{thm:gf}
Let $X$ be a proper $\mathrm{CAT}(-1)$ space, and let
$\Gamma<\operatorname{Isom}(X)$ be a finitely generated, torsion-free, discrete group with $\delta(\Ga)<1$, then $\Ga$ is geometrically finite (in the sense of Definition \ref{def:GF}) and free.
\end{thm}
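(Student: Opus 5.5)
We prove Theorem \ref{thm:gf} from Theorem \ref{thm:main} together with Bowditch's characterization of groups acting on Cantor sets. The plan has three stages: show that $\Lambda(\Ga)$ is a Cantor set, show that $\Ga$ is geometrically finite, and then show that $\Ga$ is free.

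\textbf{Reduction to the Cantor set case.} Suppose first that $\Ga$ is elementary. Since $\Ga$ is discrete and torsion-free, it is then trivial, infinite cyclic (loxodromic or parabolic), or a parabolic group fixing a single point. We treat these cases directly: such groups are free or must be ruled out, and they are geometrically finite in the sense of Definition \ref{def:GF}. A finitely generated torsion-free parabolic subgroup of a $\CAT(-1)$ isometry group could a priori be, for instance, $\mathbb Z^2$. Here we use $\delta(\Ga)<1$ to rule this out. We expect parabolic subgroups with critical exponent below $1$ to be virtually cyclic, but this may itself require an argument along the lines of Theorem \ref{thm:main}, or an appeal to the structure of horospheres.

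So assume $\Ga$ is non-elementary. Choose $\alpha\in(\delta(\Ga),1)$. Then $P_\Ga(\alpha,o)<\infty$, so Theorem \ref{thm:main} applies: $\Lambda(\Ga)$ is homeomorphic to a Cantor set and $\mathcal H^\alpha(\Lambda(\Ga))=0$.

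\textbf{Geometric finiteness via Bowditch.} The group $\Ga$ acts on the compact metrizable space $\Lambda(\Ga)$ as a minimal convergence group; this is standard for discrete groups acting on $\CAT(-1)$ spaces. Bowditch's theorem \cite{BowditchCantor} applies to a finitely generated, torsion-free group acting as a convergence group on a Cantor set. It states, roughly, that every point of the Cantor set is a conical limit point or a bounded parabolic point, so the action is geometrically finite. It also gives that $\Ga$ splits as a free product of the maximal parabolic subgroups and a free group; equivalently, $\Ga$ is virtually free relative to its peripheral structure.

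The key step is checking exactly which hypotheses Bowditch's result needs, for example finite generation and the absence of ``too many'' parabolics. We then transfer the dynamical geometric finiteness on $\Lambda(\Ga)$ to Definition \ref{def:GF}, which is presumably phrased in the same convergence-group terms of conical and bounded parabolic points. If Definition \ref{def:GF} is instead phrased via the convex hull, we would invoke the known equivalence for proper $\CAT(-1)$ spaces.

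\textbf{Freeness.} This is the main obstacle. Each maximal parabolic subgroup $P$ is a discrete torsion-free group with $\delta(P)\le\delta(\Ga)<1$. We need $P$ to be free, in fact infinite cyclic.

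First we would try the following argument. A bounded parabolic point $p$ of a Cantor-set limit set has $\Lambda(\Ga)\setminus\{p\}$ noncompact and totally disconnected. Since $P$ acts cocompactly on $\Lambda(\Ga)\setminus\{p\}$, the group $P$ has more than one end, or is two-ended. A torsion-free, finitely generated group acting cocompactly on a totally disconnected locally compact space of this kind has a Stallings splitting; Bowditch's results then give that $P$ is free. The finite generation of $P$ follows from geometric finiteness.

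Combining this with the free-product decomposition of $\Ga$ shows that $\Ga$ is free.
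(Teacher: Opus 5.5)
There are two genuine gaps, and they are linked.

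\textbf{Applying Bowditch first.} The theorem you want to invoke at the start (Theorem 1.3 of \cite{BowditchCantor}) assumes that $\Gamma$ is \emph{almost finitely presented} and acts as a minimal convergence group on a Cantor set. A finitely generated torsion-free group need not be almost finitely presented. So you cannot apply Bowditch before you know more about $\Gamma$. You call checking this hypothesis ``the key step'' but never do it.

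The paper reverses your order. It proves freeness first, independently of Bowditch, and only then applies Bowditch to the finitely presented free group to get geometric finiteness. Freeness comes from an explicit essential cut:
\begin{enumerate}
\item Write $\Lambda(\Gamma)=A\sqcup A^c$ with $A$ clopen and set $\epsilon_0=\rho_o(A,A^c)>0$.
\item Pull this back along $\gamma\mapsto\xi_\gamma$ to a partition $\Gamma=C_1\sqcup C_2$. Both pieces are infinite by Lemma \ref{lem:shadow-conv}.
\item A Cayley edge $[\gamma,\gamma s]$ crossing the cut has $\rho_o(\xi_\gamma,\xi_{\gamma s})\geq\epsilon_0$. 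By Lemma \ref{lem:decay} this forces $d(o,\gamma o)\leq L+2D-\ln\epsilon_0$, so only finitely many edges cross.
\item Stallings' theorem and torsion-freeness give a nontrivial free splitting.
\item Non-elementary free factors again have Cantor limit sets, so the argument iterates. Grushko bounds the number of factors, giving $\Gamma=\Gamma_1*\cdots*\Gamma_k$ with every $\Gamma_i$ elementary.
\end{enumerate}

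\textbf{The parabolic step.} This is where $\delta(\Gamma)<1$ actually matters: a finitely generated torsion-free discrete parabolic group $P$ with $\delta(P)<1$ must be $\mathbb Z$. You leave this as an expectation, and the ends argument you offer for it is false. A cocompact action of $P$ on the noncompact, totally disconnected set $\Lambda(\Gamma)\setminus\{p\}$ does not make $P$ multi-ended.

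For a counterexample, take a Klein combination of a rank-two parabolic group with a loxodromic cyclic group. This gives a geometrically finite torsion-free group $\mathbb Z^2*\mathbb Z<\Isom(\HH^3)$ whose limit set is a Cantor set, but whose cusp group $\mathbb Z^2$ is one-ended (here $\delta\geq 1$). So the Cantor-set conclusion alone cannot yield freeness; the hypothesis $\delta<1$ has to be applied to $P$ directly.

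The paper does this as follows. Let $c$ be the ray from $o$ to the fixed point $\xi$. Comparison with the ideal triangle $\Delta(o,so,\xi)$ gives $d(c(t),s\,c(t))\leq Ce^{-t}$ for each generator $s$. Hence $d(c(t),p\,c(t))\leq Cne^{-t}$ whenever $|p|_S\leq n$, and taking $t=\log n$ gives $d(o,po)\leq 2\log n+C$. Therefore $|B_S(n)|\lesssim n^{2\delta(P)+\epsilon}$, which is polynomial growth of degree $<2$. Gromov's theorem and the Bass--Guivarc'h formula then make $P$ virtually $\mathbb Z$, hence $\mathbb Z$ since it is torsion-free. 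Applying this to the parabolic factors $\Gamma_i$ (loxodromic ones are already $\mathbb Z$) gives freeness, and only then does Bowditch's theorem apply.
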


\begin{remark}
    Our proof relies on the torsion-freeness of $\Ga$. Note that for general $\CAT(-1)$ spaces, Selberg's lemma fails \cite{KapovichSelberg}.
\end{remark}

In particular, Theorem \ref{thm:gf} answers a question \cite[Problem 1.6]{Ka} of Kapovich.

\begin{cor}[Kapovich's conjecture]\label{cor:Kapovich}
Let $\Gamma<\Isom(\HH^n)$ be a finitely generated discrete subgroup satisfying $ \delta(\Gamma)<1. $ Then $\Gamma$ is geometrically finite and virtually free. 
\end{cor}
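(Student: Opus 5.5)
The plan is to reduce to Theorem~\ref{thm:gf} using Selberg's lemma, which is available because $\Isom(\HH^n)$ is a linear group. The remark after Theorem~\ref{thm:gf} says that Selberg's lemma can fail in general $\CAT(-1)$ spaces; for $\HH^n$ it holds, and this is the only extra input needed.

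First, $\HH^n$ with its constant curvature $-1$ metric is a proper $\CAT(-1)$ space. Also, $\Isom(\HH^n)\cong PO(n,1)$ is a linear Lie group. The finitely generated group $\Gamma$ is therefore a finitely generated linear group, and Selberg's lemma gives a finite-index torsion-free subgroup $\Gamma'<\Gamma$. This $\Gamma'$ is again finitely generated, since it has finite index in a finitely generated group, and it is discrete. Its Poincar\'e series is a subseries of that of $\Gamma$, so $P_{\Gamma'}(s,o)\le P_\Gamma(s,o)$ and hence $\delta(\Gamma')\le\delta(\Gamma)<1$. Theorem~\ref{thm:gf} then shows that $\Gamma'$ is geometrically finite and free. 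In particular $\Gamma$ is virtually free.

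It remains to pass geometric finiteness from $\Gamma'$ up to $\Gamma$. Since $[\Gamma:\Gamma']<\infty$, the orbits $\Gamma o$ and $\Gamma' o$ are at finite Hausdorff distance, so $\Lambda(\Gamma)=\Lambda(\Gamma')$. I will use the characterization of geometric finiteness in Definition~\ref{def:GF}, namely that every limit point is either conical or bounded parabolic.
\begin{itemize}
\item A conical limit point for $\Gamma'$ is conical for $\Gamma$, because $\Gamma'\subset\Gamma$ and the same sequence of elements works.
\item Let $\xi$ be a bounded parabolic point for $\Gamma'$. Then $\Stab_{\Gamma'}(\xi)\subset\Stab_\Gamma(\xi)$. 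A compact fundamental domain for the action of $\Stab_{\Gamma'}(\xi)$ on $\Lambda\setminus\{\xi\}$ also works for the larger group, so that action is cocompact as well. Moreover $\Stab_\Gamma(\xi)$ contains $\Stab_{\Gamma'}(\xi)$ with finite index, so it is still parabolic, i.e.\ it contains no loxodromics. Indeed, a power of any loxodromic in it would lie in $\Stab_{\Gamma'}(\xi)$, which is impossible.
\end{itemize}
Hence $\Gamma$ is geometrically finite. If Definition~\ref{def:GF} is phrased instead via a thick part of the convex core being compact, the argument is the same: $\Gamma'\backslash \mathrm{Hull}(\Lambda)$ finitely covers $\Gamma\backslash\mathrm{Hull}(\Lambda)$.

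The main point needing care is this last step, checking that the notion of geometric finiteness in Definition~\ref{def:GF} is inherited by finite-index supergroups. The remaining steps are formal: the critical exponent inequality and the application of Selberg's lemma. If $\Gamma$ is elementary, the conclusion also follows directly, since elementary discrete groups in $\Isom(\HH^n)$ with $\delta<1$ are virtually cyclic or virtually $\mathbb Z^k$ parabolic with $k/2<1$, hence virtually $\mathbb Z$. So no issue arises even if the torsion-free reduction is bypassed.
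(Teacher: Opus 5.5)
Your proof is correct and follows the paper's argument: pass to a torsion-free finite-index subgroup via Selberg's lemma, observe that its critical exponent is still $<1$, and apply Theorem~\ref{thm:gf}. Your explicit check that conical and bounded parabolic points of $\Gamma'$ remain so for $\Gamma$ fills in a step the paper leaves implicit. The paper's proof also writes ``free'' where ``virtually free'' is meant.
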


\begin{proof} By Selberg's lemma, there exists a torsion-free finite-index subgroup $\Ga_0<\Ga$, and $\delta(\Ga_0)=\delta(\Ga)<1$. By Theorem \ref{thm:gf}, $\Ga_0$ is geometrically finite and free. Hence $\Ga$ is geometrically finite and free. 
\end{proof}

\begin{remark}
    Corollary \ref{cor:Kapovich} also follows directly from Theorem \ref{thm:main}, using the structural results of Kulkarni \cite{Kulkarni} and Gusevskii \cite{Gus89}. See also \cite{PankkaSouto} and \cite[Corollary 7.11]{Kapovich}.
\end{remark}
 
\begin{remark} In \cite[Problem 1.6]{Ka} and \cite[Conjecture 10.14]{Kapovich}, Kapovich asked further whether every finitely generated Kleinian group with critical exponent less than one must be of classical Schottky type. This is stronger than the virtual-freeness and geometric-finiteness conclusions proved above. We do not pursue the question here, and we refer to \cite{Hou10, Hou20, Hou23} for related results. 
 \end{remark}

\begin{remark}
    Kapovich also conjectured \cite[Conjecture 10.17]{Kapovich} that under a milder condition $\delta(\Gamma)<2$, it is sufficient to imply the geometric finiteness of $\Ga$. This is true for $n=3$, which follows from the tameness Theorem, the Ahlfors measure conjecture, and the work of Bishop–Jones (See also \cite{BishopJones,Ka}). However, the current method fails in the general range of $1\leq\delta(\Ga)<2$. In fact, the condition $\delta(\Ga)<1$ closely lines up with the $-1$ curvature bound.
\end{remark}

We mention a few more prior works relating to the question.
\begin{enumerate}
    \item[$\bullet$] Pankka–Souto \cite{PankkaSouto} proved that if a Kleinian group (not necessarily finitely generated) has Hausdorff dimension less than $1$, then it is free.
    \item[$\bullet$] Chang–Qing–Yang \cite{CQY} proved that a conformally finite Kleinian group is geometrically finite unless its limit set has the maximal possible Hausdorff dimension.   
    \item[$\bullet$] When $X$ is a pinched negatively curved Riemannian manifold, the authors \cite{LiuWang} proved that if $\Ga$ has sufficiently small $\delta(\Ga)$ (depending on the dimension and pinching constant), then $\Ga$ is convex cocompact.
    \item[$\bullet$] Kapovich showed \cite{Kapovich} that when $X=\HH^n$, the virtual homological dimension of $\Ga$ is bounded by $\delta(\Ga)+1$. As a result, if $\Ga$ is finitely presented with $\delta(\Ga)<1$, it must be virtually free. (See also \cite{CFM, CMW25, CMW26} for further generalizations.)
\end{enumerate}

\subsection*{The structure of limit sets} The visual boundary $\partial_\infty X$ is the disjoint union of the limit set $\Lambda(\Ga)$ and the domain of discontinuity $\Omega(\Ga)$, while the limit set itself decomposes into the sets of conical and non-conical limit points. (See Section \ref{sec:limit} for precise definitions of $\Lambda_c(\Ga)$ and $\Lambda_{nc}(\Ga)$.) These decompositions are important both geometrically and dynamically, as they encode the recurrence properties of geodesic rays in the quotient space $\Ga\backslash X$, relative either to the convex core or to arbitrary compact subsets.

By the Bishop--Jones theorem and its generalizations \cite{BishopJones,Paulin97,DSU17,Cavallucci}, the critical exponent of a non-elementary discrete group of isometries of a proper $\CAT(-1)$ space satisfies
\[\delta(\Ga)=\dim_{\mathcal H}\bigl(\Lambda_c(\Ga)\bigr).\]
Consequently, $\delta(\Ga)$ may be strictly smaller than the Hausdorff dimension of the entire limit set. Thus, any gap between the critical exponent and the Hausdorff dimension of the full limit set can arise only from the size of the non-conical limit set. The Hausdorff dimension of nonconical limit sets has been studied in
a variety of settings. See
\cite{FernandezMelian,BishopJonesLIL,Gonye,
KapovichLiuNonconical,MjYang}.

The structure and size of the conical and non-conical parts of the limit set are closely related to the notion of geometric finiteness (See Definition \ref{def:GF}). Geometrically finite groups form a fundamental class of negatively curved actions with well-behaved geometric and dynamical properties \cite{BeardonMaskit, Patterson76,Sullivan79, Bow93, Bow95, Roblin}. For such groups, the set of non-conical limit points is countable, and 
\[\delta(\Ga)=\dim_{\mathcal H}\bigl(\Lambda(\Ga)\bigr).\]

On the contrary, for geometrically infinite groups, the full limit set can a priori be much larger than what is detected by the critical exponent. Indeed, for every $\epsilon>0$, Patterson \cite{Patterson83} constructed a discrete, non-elementary, infinitely generated group $\Gamma_\epsilon
   <\operatorname{Isom}(\mathbb H^n)$
such that $\Lambda(\Ga)=S^{n-1}$ and $0<\delta(\Gamma_\epsilon)\leq\epsilon.$ There is a more systematic way of producing such examples by constructing non-amenable covers of compact locally $\CAT(-1)$-manifolds.  For example, using an adapted Rips’ construction of Wise \cite{Wise}, one can create a finitely generated normal subgroup of a $\CAT(-1)$ group whose quotient is the free group $F_2$. In particular, these examples satisfy \cite{CDS18}
\[\delta(\Ga)<\dim_{\mathcal H}\bigl(\Lambda(\Ga)\bigr).\]

Our Theorem \ref{thm:main} and $\ref{thm:gf}$ simply show that under the condition $\delta(\Ga)<1$, the structure of $\Lambda(\Ga)$ must be simple enough. We do not know if the condition $\delta(\Ga)<1$ is sharp. There are geometrically infinite surface Kleinian groups constructed by \cite{Jorgensen77} whose critical exponent is $2$, so the sharp constant should lie in $[1,2]$. Motivated by this, we ask the following question.

\begin{question}
    What is the smallest real number $\delta_0\geq 1$ such that the following statement holds? If $\Gamma$ is a finitely generated, torsion-free, discrete group acting on a proper $\CAT(-1)$ space whose critical exponent is $<\delta_0$, then $\Ga$ must be geometrically finite.
\end{question}

\subsection*{AI declaration} The initial idea of using a spanning tree together with the Poincar\'e series to control the $\alpha$-energy (Section \ref{sec:AI}) was suggested by OpenAI's ChatGPT (GPT-5.6 Sol). The authors subsequently verified and developed the argument in full generality and take complete responsibility for all mathematical claims and other contents of this article.

\subsection*{Acknowledgement}
SW was partially supported by NSFC-12301085.

\section{\CAT(-1) geometry}\label{sec:geometry}

\subsection{Visual metric.}
We let $X$ denote a proper $\CAT(-1)$ space. The visual boundary $\partial_\infty X$ consists of asymptotic classes of geodesic rays, where two rays $c_1,c_2:[0,\infty)\to X$ are asymptotic if 
\[\sup_{t\geq 0}d(c_1(t),c_2(t))<\infty.\]
Equipped with the cone topology, the union $\overline{X}=X\cup \partial_\infty X$ gives a natural compactification of $X$, known as the visual compactification.

Using the Gromov product, we can describe the cone topology concretely. Recall that for any $x,y\in X$, the Gromov product (based at $o$) is given by
\[(x,y)_o=\frac{1}{2}
   \Bigl(
      d(o,x)
      +d(o,y)
      -d(x,y)
   \Bigr).\]
The definition naturally extends to $\overline X$. If $x\in X$ and $\xi\in \partial_\infty X$, then
\[(x,\xi)_o
   :=
   \lim_{t\to\infty}
   \frac{1}{2}
   \Bigl(
      d(o,c_\xi(t))
      +d(o,x)
      -d(c_\xi(t),x)
   \Bigr),\]
and if $\xi,\eta\in \partial_\infty X$, then
\[
(\xi,\eta)_o
   :=
   \lim_{t\to\infty}
   \frac{1}{2}
   \Bigl(
      d(o,c_\xi(t))
      +d(o,c_\eta(t))
      -d(c_\xi(t),c_\eta(t))
   \Bigr).
\]
Here and throughout the paper, $c_\xi$ and $c_\eta$ denote the unit-speed geodesic rays emanating
from $o$ and terminating at $\xi$ and $\eta$, respectively. Then any sequence $x_n\in X$ converges to $\xi\in \partial_\infty X$ if and only if $$(x_n,\xi)_o\to \infty,$$ and any sequence $\xi_n\in \partial_\infty X$ converges to $\xi\in \partial_\infty X$ if and only if $$(\xi_n,\xi)_o\to \infty.$$

The visual boundary also carries a
canonical metric, called the visual metric or the Bourdon metric
\cite{Bourdon95,BridsonHaefliger}, defined by
\[
\rho_o(\xi,\eta)
   :=e^{-(\xi,\eta)_o},
   \qquad \xi,\eta\in\partial_\infty X,
\]
The $\mathrm{CAT}(-1)$ condition guarantees that $\rho_o$ is a genuine
metric, and properness shows $\partial_\infty X$ is compact. Thus, with the visual metric, $\partial_\infty X$ is a compact (hence complete) metric space with diameter $\leq 1$. For a different choice of basepoint $o'\in X$, we have by the triangle inequality that
\[
e^{-d(o,o')}\rho_o
   \leq
   \rho_{o'}
   \leq
   e^{d(o,o')}\rho_o.
\]
so the visual metrics $\rho_o$ and $\rho_{o'}$ are bi-Lipschitz equivalent. For a more detailed exposition of the visual boundary and the Bourdon metric, we refer the reader to
\cite[\S\S~1.4--1.5 and 2.4--2.6]{Bourdon95};
see also \cite[Chapter~III.H]{BridsonHaefliger}.

We will use the following properties of the Gromov product in Section \ref{sec:proof}.

\begin{lemma}
\label{lem:gromov-product-lipschitz}
For any $x,y\in X$ and $\xi\in\partial_\infty X$, we have
\[
\bigl|(x,\xi)_o-(y,\xi)_o\bigr|
\leq d(x,y).
\]
In particular,
\[
(x,\xi)_o
\geq
(y,\xi)_o-d(x,y).
\]
\end{lemma}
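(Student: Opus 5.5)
The plan is to prove the inequality first for finite points and then pass to the limit along the ray $c_\xi$. Fix $x,y\in X$ and a point $z\in X$. From the definition of the Gromov product,
\[
(x,z)_o-(y,z)_o=\tfrac12\bigl(d(o,x)-d(o,y)\bigr)+\tfrac12\bigl(d(y,z)-d(x,z)\bigr).
\]
Applying the triangle inequality to each bracket gives $|d(o,x)-d(o,y)|\le d(x,y)$ and $|d(y,z)-d(x,z)|\le d(x,y)$. Hence
\[
\bigl|(x,z)_o-(y,z)_o\bigr|\le d(x,y)\quad\text{for every } z\in X.
\]

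Next, substitute $z=c_\xi(t)$. By the extended definition of the Gromov product, $(x,c_\xi(t))_o\to(x,\xi)_o$ and $(y,c_\xi(t))_o\to(y,\xi)_o$ as $t\to\infty$. The existence of these limits is implicit in the definition the paper gives. It also follows from monotonicity: by the triangle inequality, $t-d(c_\xi(t),x)$ is nondecreasing in $t$ and bounded above by $d(o,x)$. Since the bound $d(x,y)$ does not depend on $t$, letting $t\to\infty$ yields $|(x,\xi)_o-(y,\xi)_o|\le d(x,y)$.

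The second inequality, $(x,\xi)_o\ge(y,\xi)_o-d(x,y)$, is simply one side of this absolute-value bound.

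There is no real obstacle here. The only point needing care is that the limit defining $(x,\xi)_o$ exists, and the monotonicity observation settles this.
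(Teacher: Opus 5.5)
Your proposal is correct and follows essentially the same route as the paper: bound $\bigl|(x,c_\xi(t))_o-(y,c_\xi(t))_o\bigr|\le d(x,y)$ for each $t$ via the triangle inequality, then let $t\to\infty$. Your monotonicity argument for the existence of the limit is a valid addition; the paper leaves that point implicit in its definition.
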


\begin{proof}
Let $c_\xi:[0,\infty)\to X$ be the geodesic ray from $o$ to $\xi$.
By definition,
\[
(x,\xi)_o
=
\lim_{t\to\infty}(x,c_\xi(t))_o,
\]
where
\[
(x,c_\xi(t))_o
=
\frac12
\left(
d(o,x)+d(o,c_\xi(t))-d(x,c_\xi(t))
\right).
\]
For every $t\geq0$, the triangle inequality gives
\[
\begin{aligned}
\bigl|
(x,c_\xi(t))_o-(y,c_\xi(t))_o
\bigr|
&\leq
\frac12
\left(
|d(o,x)-d(o,y)|
+
|d(x,c_\xi(t))-d(y,c_\xi(t))|
\right)\\
&\leq d(x,y).
\end{aligned}
\]
Letting $t\to\infty$ yields
\[
\bigl|(x,\xi)_o-(y,\xi)_o\bigr|
\leq d(x,y),
\]
as desired.
\end{proof}

\begin{lemma}\label{lem:gromov-prod}
Let $\xi, \eta\in \partial_\infty X$. If $p\in c_\xi$ and $q\in c_\eta$, then
\begin{equation*}
  (\xi,\eta)_o \geq (p,\eta)_o
  \geq (p,q)_o.
\end{equation*}
\end{lemma}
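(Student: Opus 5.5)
We prove the two inequalities separately. Each one reduces to the monotonicity of the Gromov product along a geodesic ray issuing from $o$, together with passing to a limit.

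\emph{Monotonicity.} Let $c$ be a geodesic ray from $o$ and $z\in X$. Then $t\mapsto (c(t),z)_o$ is non-decreasing. Indeed, for $s<t$ we have $d(o,c(t))=t$ and $d(o,c(s))=s$. The triangle inequality gives $d(c(t),z)\le (t-s)+d(c(s),z)$. Substituting into the definition,
\[
(c(t),z)_o=\tfrac12\bigl(t+d(o,z)-d(c(t),z)\bigr)\ \ge\ \tfrac12\bigl(s+d(o,z)-d(c(s),z)\bigr)=(c(s),z)_o.
\]
This uses only the triangle inequality, not the CAT$(-1)$ condition.

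\emph{Second inequality $(p,\eta)_o\ge (p,q)_o$.} Write $q=c_\eta(s)$. By the monotonicity with $c=c_\eta$ and $z=p$, we have $(p,c_\eta(t))_o\ge (p,c_\eta(s))_o=(p,q)_o$ for all $t\ge s$. Letting $t\to\infty$, the left side tends to $(p,\eta)_o$ by the definition given above, since $(x,\xi)_o$ was defined as exactly this limit along $c_\xi$. The limit exists because the sequence is monotone and bounded by $d(o,p)$.

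\emph{First inequality $(\xi,\eta)_o\ge (p,\eta)_o$.} Write $p=c_\xi(r)$. For $t\ge r$, monotonicity along $c_\xi$ with $z=c_\eta(t)$ gives
\[
(c_\xi(t),c_\eta(t))_o\ \ge\ (c_\xi(r),c_\eta(t))_o=(p,c_\eta(t))_o .
\]
Let $t\to\infty$. The right side tends to $(p,\eta)_o$. The left side tends to $(\xi,\eta)_o$ by the definition, which uses the diagonal limit along $c_\xi(t)$ and $c_\eta(t)$. The existence of that limit is part of the paper's definition in the CAT$(-1)$ setting; it also follows from monotonicity in each variable. This yields $(\xi,\eta)_o\ge (p,\eta)_o$.

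The only delicate point is the passage to the limit. It is exactly how the boundary Gromov products were defined, namely along the rays from $o$, so no further comparison geometry is needed.
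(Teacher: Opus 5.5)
Your proof is correct and is essentially the paper's argument: the paper's key step is the triangle inequality $d(c_\xi(t),c_\eta(t))\le (t-d(o,p))+d(p,c_\eta(t))$ followed by letting $t\to\infty$, which is exactly your monotonicity estimate applied with $z=c_\eta(t)$, and the paper handles the second inequality ``similarly'' in the same way you do. Your monotonicity framing also makes the existence of the limits transparent (monotone and bounded, or monotone in $[0,\infty]$ for the diagonal limit), a point the paper leaves implicit.
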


\begin{proof}
We prove the first inequality. Let $t>0$ be large enough so that $p\in [o,c_\xi(t)]$. Using the triangle inequality, we obtain that 
\begin{align*}
d\bigl(c_\xi(t),c_\eta(t)\bigr)&\leq d(c_\xi(t),p)+d(p,c_\eta(t))\\
&= \bigl(t-d(o,p)\bigr)+d(p,c_\eta(t)).
\end{align*}
It follows that
\[\frac{1}{2}\left[2t-d\bigl(c_\xi(t),c_\eta(t)\bigr)\right]\geq \frac{t+d(o,p)-d(p,c_\eta(t))}2\]
for any large $t$. By taking the limit $t\to \infty$, we obtain 
$(\xi,\eta)_o\geq (p,\eta)_o$.
The second inequality follows similarly.
\end{proof}

\subsection{Limit sets and geometric finiteness}\label{sec:limit}
Let $\Ga<\Isom(X)$ be a discrete isometry subgroup acting on a proper $\CAT(-1)$-space $X$, and $o\in X$ be a chosen basepoint. The limit set $\Lambda(\Gamma)\subset \partial_\infty X$ consists of the accumulation points of $\{\Gamma o\}$. The group $\Gamma$ is said to be non-elementary if $|\Lambda(\Gamma)|\geq 3$. Note that the definition of $\Lambda(\Ga)$ is independent of the choice of $o\in X$. 

We call a limit point $\xi\in\Lambda(\Gamma)$ \emph{conical}, or
\emph{radial}, if there exist a constant $R>0$, a sequence
$t_j\to\infty$, and elements $\gamma_j\in\Gamma$ such that
\[
d\bigl(c_\xi(t_j),\gamma_j o\bigr)\leq R
\qquad\text{for every }j.
\]
We denote the set of conical limit points by $\Lambda_c(\Gamma)$ and
its complement in the limit set by
\[
\Lambda_{nc}(\Gamma)
   :=\Lambda(\Gamma)\setminus\Lambda_c(\Gamma),
\]
the \emph{nonconical limit set}.

For $\xi\in\Lambda(\Gamma)$, let 
\[ \Gamma_\xi := \Stab_\Gamma(\xi) = \{\gamma\in\Gamma:\gamma\xi=\xi\}. \] 
A point $\xi\in\Lambda(\Gamma)$ is called a \emph{parabolic point} if $\Gamma_\xi$ is infinite and fixes no point of $\Lambda(\Gamma)\setminus\{\xi\}$. A parabolic point $\xi$ is called \emph{bounded parabolic} if its stabilizer $\Gamma_\xi$ acts cocompactly on \[ \Lambda(\Gamma)\setminus\{\xi\}. \]
We denote the set of bounded parabolic points by $\Lambda_{bp}(\Gamma)$.

Following Bowditch \cite{Bow95} and Roblin \cite{Roblin}, we recall the dynamical notion of geometric finiteness for discrete groups acting on proper $\CAT(-1)$ spaces.

\begin{definition}\label{def:GF}
    Let $\Ga<\Isom(X)$ be a discrete isometry group of a proper $\CAT(-1)$ space $X$. $\Ga$ is \emph{geometrically finite} if every point of the limit set is either conical or bounded parabolic; that is, \[ \Lambda(\Gamma) = \Lambda_c(\Gamma)\cup\Lambda_{bp}(\Gamma). \]
\end{definition}

\subsection{Hausdorff dimension.}
We briefly recall the definitions of Hausdorff measure and Hausdorff
dimension. Let $(X,d)$ be a metric space and let $E\subset X$. For
$\alpha>0$ and $\delta>0$, the $\alpha$-dimensional $\delta$-Hausdorff content
of $E$ is defined by
\[
\mathcal H^\alpha_\delta(E)
:=
\inf\left\{
\sum_{j\in J} \bigl(\diam U_j\bigr)^\alpha
:
E\subseteq \bigcup_{j\in J} U_j,\ 
\diam U_j\leq \delta
\right\},
\]
where the infimum is taken over all countable covers
$\{U_j\}_{j\in J}$ of $E$.

Since $\mathcal H^{\alpha}_\delta(E)$ does not decrease as $\delta\to 0$, the
limit
\[
\mathcal H^{\alpha}(E)
:=
\lim_{\delta\to 0}\mathcal H^{\alpha}_\delta(E)
=
\sup_{\delta>0}\mathcal H^{\alpha}_\delta(E)
\]
exists in $[0,\infty]$. It is called the $\alpha$-dimensional Hausdorff measure of $E$.

The Hausdorff dimension of $E$ is defined by
\[
\dim_{\mathcal H}(E)
:=
\inf\left\{
\alpha>0:\mathcal H^{\alpha}(E)=0
\right\}.
\]
Equivalently,
\[
\dim_{\mathcal H}(E)
=
\sup\left\{
\alpha>0:\mathcal H^\alpha(E)=\infty
\right\}.
\]
In particular, if
\[
\mathcal H^\alpha(E)<\infty,
\]
then
\[
\dim_{\mathcal H}(E)\leq \alpha.
\]
In particular, the same conclusion holds whenever
$\mathcal H^\alpha(E)=0$.

\section{Proof of Theorem \ref{thm:main}}\label{sec:proof}
\subsection{The projection map $\xi$}
Since $\Ga$ is non-elementary, we have $|\Lambda(\Ga)|\geq 3$. Choose a pair of distinct points $A, B\in \Lambda(\Gamma)$. Since $X$ is $\CAT(-1)$, $A$ and $B$ are joined by a bi-infinite
geodesic $c$. We then choose our basepoint $o\in c$. Let $D>0$ be a universal constant such that $\HH^n$ is $D$-hyperbolic; thus by comparison geometry, all geodesic triangles on $X$ are $D$-thin.

We now define a ``projection map'' $\xi:\Gamma\to \Lambda(\Gamma)$ as follows: for each $\gamma\in \Gamma$, since the point $\gamma o$ lies on the bi-infinite geodesic connecting $\gamma A$ and $\gamma B$. By the $D$-thin property of the geodesic triangle $\Delta(o, \gamma A, \gamma B)$, there exists $\xi(\gamma)\in \{\gamma A,\gamma B\}$ such that $\gamma o$ is $D$-close to the geodesic ray $c_{\xi(\gamma)}$. For simplicity, we also denote $\xi(\gamma)$ by $\xi_{\gamma}$. Geometrically, $\xi_\gamma$ lies in the shadow of the ball $B(\gamma o, D)$ from $o$. 

\begin{lemma}\label{lem:shadow-conv}
    If $\gamma_i o$ is a sequence of orbits converging to $\lambda\in \Lambda(\Ga)$, then $\xi_{\gamma_i}\to\lambda$.
\end{lemma}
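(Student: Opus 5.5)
The plan is to show directly that $(\xi_{\gamma_i},\lambda)_o\to\infty$, using the convergence criterion for the cone topology stated in Section~\ref{sec:geometry}. Since $\gamma_i o\to\lambda$, we have $(\gamma_i o,\lambda)_o\to\infty$, and in particular $d(o,\gamma_i o)\to\infty$, because $(\gamma_i o,\lambda)_o\le d(o,\gamma_i o)$. The task is to pass from the orbit point $\gamma_i o$ to the boundary point $\xi_{\gamma_i}$, losing only a bounded amount in the Gromov product.

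By construction of the projection map, there is a point $p_i$ on the ray $c_{\xi_{\gamma_i}}$ with $d(p_i,\gamma_i o)\le D$. Lemma~\ref{lem:gromov-product-lipschitz}, applied with the boundary point $\lambda$, gives
\[
(p_i,\lambda)_o\ \ge\ (\gamma_i o,\lambda)_o - D .
\]
Next I apply Lemma~\ref{lem:gromov-prod} with the roles arranged so that $p_i\in c_{\xi_{\gamma_i}}$ sits on the first ray and $\lambda$ is the second boundary point. Its first inequality yields
\[
(\xi_{\gamma_i},\lambda)_o\ \ge\ (p_i,\lambda)_o .
\]
Combining the two displays,
\[
(\xi_{\gamma_i},\lambda)_o\ \ge\ (\gamma_i o,\lambda)_o - D\ \longrightarrow\ \infty,
\]
so $\xi_{\gamma_i}\to\lambda$.

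The only point needing care is the application of Lemma~\ref{lem:gromov-prod}. That lemma is stated for $p\in c_\xi$ and a second boundary point $\eta$, and here we take $\xi=\xi_{\gamma_i}$ and $\eta=\lambda$, which is exactly the configuration required. There is also the degenerate case $\xi_{\gamma_i}=\lambda$, where the Gromov product is $+\infty$. This case is harmless, since it only helps the conclusion.

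I do not expect a genuine obstacle: the whole argument is the $D$-closeness from the definition of $\xi_\gamma$ together with the two Gromov product lemmas already established.
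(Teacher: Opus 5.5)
Your proof is correct and matches the paper's argument step for step. You choose $p_i\in c_{\xi_{\gamma_i}}$ within $D$ of $\gamma_i o$, use Lemma~\ref{lem:gromov-product-lipschitz} to lose at most $D$, and then use the first inequality of Lemma~\ref{lem:gromov-prod} to get $(\xi_{\gamma_i},\lambda)_o\ge(\gamma_i o,\lambda)_o-D\to\infty$.
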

\begin{proof}
    It is equivalent to show $(\xi_{\gamma_i},\lambda)_o\to\infty$. For the convenience, we denote $\xi_i$ short for $\xi_{\gamma_i}$. From the construction of $\xi_\gamma$, there exists $p_i\in c_{\xi_i}$ such that $d(\gamma_i o,p_i)\leq D$. Then by Lemma \ref{lem:gromov-product-lipschitz}, we have
\begin{equation}\label{eq:1}
    (p_i,\lambda)_o\geq (\gamma_i o,\lambda)_o-D.
\end{equation}
Apply Lemma \ref{lem:gromov-prod}, we have
\begin{equation}\label{eq:2}
    (\xi_{i},\lambda)_o
  \geq (p_i,\lambda)_o.
\end{equation}
Since $\gamma_i o\to\lambda$, we have $(\gamma_i o, \lambda)_o\to \infty$, so combining with \eqref{eq:1} and \eqref{eq:2}, we obtain $(\xi_i,\lambda)_o\to \infty$, or that $\xi_i(=\xi_{\gamma_i})\to \lambda$.
\end{proof}

\subsection{Spanning tree and finite $\alpha$-energy}\label{sec:AI}
We choose an arbitrary finite symmetric generating set $S$ of $\Gamma$, and choose any maximal spanning tree $\mathcal T$ of the Cayley graph $\Cay(\Gamma, S)$. Then $\mathcal T$ is a finite valence tree whose vertices are in one-to-one correspondence with $\Gamma$. The tree is rooted at the identity and the degree at each vertex is at most $|S|$. We declare the tree has edge length $1$ and for each $\gamma\in \Gamma$ we call $d_\mathcal T(\gamma,1)$ the depth of $\gamma$, denoted by $|\gamma|$. Note that, for each vertex $\gamma$, all possible vertices adjacent to $\gamma$ are of the form $\gamma s$ where $s\in S$. For any $\gamma_1,\gamma_2\in \Ga$, we denote by $[\gamma_1,\gamma_2]$ the unique geodesic path in $\mathcal T$ joining $\gamma_1, \gamma_2$.

\begin{definition}\label{def:energy}
    For a given real number $\alpha\in (0,1]$, and a given edge $e=[\gamma,\gamma s]$ on $\mathcal T$, the $\alpha$-density of the edge is defined as
    \[\ell_\alpha(e)=\rho_o(\xi_\gamma,\xi_{\gamma s})^\alpha.\]
    We call $E_\alpha(\mathcal T)$ the $\alpha$-energy of $\mathcal T$ if
    \[E_\alpha (\mathcal T)=\sum_{e\in E(T)}\ell_\alpha(e).\]
    Moreover, if $\alpha=1$, we omit the subscripts by simply writing $\ell(e)$ and $E(\mathcal T)$.
\end{definition}

The following is a key lemma for the energy estimates.

\begin{lemma}\label{lem:decay}
For every $\gamma\in\Gamma$ and $s\in S$,
\begin{equation*}
  \rho_o(\xi_\gamma,\xi_{\gamma s})
  \leq e^{L+2D}e^{-d(o,\gamma o)},
\end{equation*}
where $L=\max\{d(so,o):s\in S\}$.
\end{lemma}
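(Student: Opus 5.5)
We want to bound $\rho_o(\xi_\gamma,\xi_{\gamma s})=e^{-(\xi_\gamma,\xi_{\gamma s})_o}$ from above, which amounts to showing
\[
(\xi_\gamma,\xi_{\gamma s})_o\geq d(o,\gamma o)-L-2D .
\]
The plan is to pass from the boundary points $\xi_\gamma$, $\xi_{\gamma s}$ to points on their rays that lie near the orbit points $\gamma o$ and $\gamma s o$. Then we compare Gromov products of these nearby points using Lemma~\ref{lem:gromov-prod} and the Lipschitz bound of Lemma~\ref{lem:gromov-product-lipschitz}.

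\textbf{Step 1.} By construction of the projection map, there are points $p\in c_{\xi_\gamma}$ and $q\in c_{\xi_{\gamma s}}$ with
\[
d(p,\gamma o)\leq D, \qquad d(q,\gamma s o)\leq D .
\]
Since $\gamma$ is an isometry, $d(\gamma o,\gamma s o)=d(o,so)\leq L$. Hence
\[
d(p,q)\leq L+2D .
\]

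\textbf{Step 2.} By Lemma~\ref{lem:gromov-prod},
\[
(\xi_\gamma,\xi_{\gamma s})_o\geq (p,q)_o=\tfrac12\bigl(d(o,p)+d(o,q)-d(p,q)\bigr).
\]
We have $d(o,p)\geq d(o,\gamma o)-D$. The triangle inequality gives $d(o,q)\geq d(o,p)-d(p,q)$. Combining these,
\[
(p,q)_o\geq d(o,p)-d(p,q)\geq d(o,\gamma o)-D-(L+2D)=d(o,\gamma o)-L-3D .
\]
This yields the estimate with constant $e^{L+3D}$ instead of $e^{L+2D}$.

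\textbf{Step 3: sharpening the constant.} The main obstacle is the bookkeeping needed to reach exactly $L+2D$. Instead of bounding $d(o,q)$ through $p$, we use $q$'s own distance:
\[
d(o,q)\geq d(o,\gamma s o)-D\geq d(o,\gamma o)-L-D .
\]
Combining this with $d(o,p)\geq d(o,\gamma o)-D$ and $d(p,q)\leq L+2D$ gives
\[
(p,q)_o\geq \tfrac12\bigl(2d(o,\gamma o)-L-2D-L-2D\bigr)=d(o,\gamma o)-L-2D .
\]
Exponentiating gives the claim.
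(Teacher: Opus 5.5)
Your proof is correct and follows the paper's argument essentially verbatim: you choose $p\in c_{\xi_\gamma}$ and $q\in c_{\xi_{\gamma s}}$ within $D$ of $\gamma o$ and $\gamma s o$, bound $d(o,p)$, $d(o,q)$ and $d(p,q)$ by the triangle inequality, and conclude with Lemma~\ref{lem:gromov-prod}. Step 2 is an unnecessary detour, since Step 3 alone is exactly the paper's bookkeeping.
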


\begin{proof}
By the definition of $\xi$, there exist $p\in c_{\xi_\gamma}$ and $q\in c_{\xi_{\gamma s}}$ such that $d(\gamma o,p)\leq D$ and $d(\gamma s o,q)\leq D$. By the triangle inequality, we have 
\[d(o,p)\geq d(o,\gamma o)-d(\gamma o,p)\geq d(o,\gamma o)-D,\]
and similarly,
\[d(o,q)\geq d(o,\gamma o)-d(\gamma  o,\gamma s o)-d(\gamma s o, q)\geq d(o,\gamma  o)-L-D,\]
Moreover,
\begin{equation*}
  d(p,q)\leq d(p,\gamma o)+d(\gamma o,\gamma s o)+d(\gamma s o,q)\leq L+2D.
\end{equation*}
Hence it follows from Lemma \ref{lem:gromov-prod} that
\begin{align*}
  (\xi_\gamma,\xi_{\gamma s})_o&\geq \frac{d(o,p)+d(o,q)-d(p,q)}2 \\
  &\geq \frac{1}{2}\Big((d(o,\gamma o)-D)+(d(o,\gamma o)-L-D)-(L+2D)\Big)\\
  &=d(o,\gamma o)-L-2D.
\end{align*}
Therefore, we have
\[\rho_o(\xi_\gamma, \xi_{\gamma s})=e^{-(\xi_\gamma,\xi_{\gamma s})_o}\leq e^{L+2D}e^{-d(o,\gamma o)}.\]
\end{proof}

\begin{proposition}\label{prop:energy}
    Let $\alpha\in (0,1]$ be a constant. Suppose $P_{\Gamma}(\alpha,o)<\infty$, then the $\alpha$-energy of $\mathcal T$ is finite.
\end{proposition}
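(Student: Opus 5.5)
The plan is to bound the energy edge by edge using Lemma \ref{lem:decay}, and then to regroup the sum over edges as a sum over group elements, which is dominated by the Poincaré series.

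First we orient the edges. Since $\mathcal T$ is rooted at the identity, every edge $e$ joins a vertex to its parent. Each non-root vertex $\gamma'$ has exactly one edge to its parent. Conversely, each vertex $\gamma$ is the parent endpoint of at most $|S|$ edges, because every vertex adjacent to $\gamma$ has the form $\gamma s$ with $s\in S$. Every edge can therefore be written as $e=[\gamma,\gamma s]$ with $\gamma$ the vertex nearer the root and $s\in S$, and this gives an injection
\[
E(\mathcal T)\hookrightarrow \Gamma\times S, \qquad e\mapsto(\gamma,s).
\]
In fact we may pick either endpoint as $\gamma$; all we need is a map from edges to pairs $(\gamma,s)$ that is at most $|S|$-to-one over each $\gamma$. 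The symmetric definition $\ell_\alpha(e)=\rho_o(\xi_\gamma,\xi_{\gamma s})^\alpha$ does not depend on which endpoint is labelled $\gamma$, because $\rho_o$ is symmetric and $S$ is symmetric.

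Next we apply Lemma \ref{lem:decay} to each edge $e=[\gamma,\gamma s]$ and raise the bound to the power $\alpha>0$:
\[
\ell_\alpha(e)\leq e^{\alpha(L+2D)}e^{-\alpha d(o,\gamma o)}.
\]
Summing over all edges and using that each $\gamma$ is charged by at most $|S|$ edges gives
\[
E_\alpha(\mathcal T)\leq |S|\,e^{\alpha(L+2D)}\sum_{\gamma\in\Gamma}e^{-\alpha d(o,\gamma o)}=|S|\,e^{\alpha(L+2D)}P_\Gamma(\alpha,o)<\infty.
\]

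There is no real obstacle in this argument. The only points needing care are the bookkeeping that makes each group element charged at most $|S|$ times, and the check that $P_\Gamma(\alpha,o)$ is evaluated at the same basepoint $o\in c$ used to define $\xi$. The latter is harmless: finiteness of the Poincaré series at exponent $\alpha$ does not depend on the basepoint, since $|d(o,\gamma o)-d(o',\gamma o')|\leq 2d(o,o')$.
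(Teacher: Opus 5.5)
Your proposal is correct and matches the paper's proof. You apply Lemma \ref{lem:decay} to each edge, raise the bound to the power $\alpha$, and charge each edge to one endpoint, with at most $|S|$ edges per vertex, to get $E_\alpha(\mathcal T)\leq |S|e^{\alpha(L+2D)}P_\Gamma(\alpha,o)$; your parent-orientation bookkeeping and basepoint remark just make explicit what the paper compresses into one line.
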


\begin{proof}
    By the assumption, the Poincar\'e series
    \[P_{\Gamma}(\alpha,o)=\sum_{\gamma\in \Gamma}e^{-\alpha d(o,\gamma o)}\]
    converges. Thus, the $\alpha$-energy of $\mathcal T$ satisfies
    \begin{align*}
        E_\alpha(\mathcal T)&=\sum_{e=[\gamma,\gamma s]\in E(T)}\rho_o(\xi_\gamma,\xi_{\gamma s})^\alpha\\
        &\leq |S|\cdot \sum_{\gamma\in \Gamma}e^{\alpha(L+2D)}e^{-\alpha d(o,\gamma o)}\\
        &<+\infty,
    \end{align*}
    where the second inequality uses Lemma \ref{lem:decay} together with the fact that the degree at each vertex of $\mathcal T$ is at most $|S|$.
\end{proof}

\subsection{The boundary map}
Since $\mathcal T$ is a tree, the visual boundary of $\mathcal T$ consists of geodesic rays at $1$, thus for each $\omega\in \partial_\infty \mathcal T$, there is a unique representing sequence $\omega=(1=\gamma_0,\gamma_1,\gamma_2,\cdots)$ where $\gamma_n\in \Gamma$ is a depth $n$ element.

\begin{proposition}
     If $P_\Gamma(1,o)<\infty$, then for every $\omega=(1=\gamma_0,\gamma_1,\gamma_2,\cdots)\in \partial_\infty \mathcal T$, the corresponding sequence $(\xi_{\gamma_0},\xi_{\gamma_1},\xi_{\gamma_2},\cdots)$ is a Cauchy sequence under the visual metric. We call its limit $\Phi(\omega)$. This gives a well-defined boundary map $$\Phi:\partial_\infty \mathcal T\to \Lambda(\Gamma)\subset \partial_\infty X$$
\end{proposition}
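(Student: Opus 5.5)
The plan is to control the tail sums of consecutive distances along the ray $\omega$ by the $1$-energy from Proposition \ref{prop:energy}, and then use completeness and closedness.

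\emph{Step 1: consecutive steps are tree edges.} Fix $\omega=(1=\gamma_0,\gamma_1,\gamma_2,\dots)$. For each $n$, the vertices $\gamma_n$ and $\gamma_{n+1}$ are adjacent in $\mathcal T$, so $e_n=[\gamma_n,\gamma_{n+1}]$ is an edge, with $\gamma_{n+1}=\gamma_n s$ for some $s\in S$. Because a geodesic ray in a tree never repeats a vertex, the edges $e_n$ are pairwise distinct.

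\emph{Step 2: Cauchy estimate.} Apply Proposition \ref{prop:energy} with $\alpha=1$ to get
\[
\sum_{n\ge 0}\rho_o(\xi_{\gamma_n},\xi_{\gamma_{n+1}})=\sum_{n\ge0}\ell(e_n)\le E(\mathcal T)<\infty .
\]
For $m<n$, the triangle inequality for the metric $\rho_o$ gives
\[
\rho_o(\xi_{\gamma_m},\xi_{\gamma_n})\le\sum_{k=m}^{n-1}\ell(e_k).
\]
The right-hand side is a tail of a convergent series, so it tends to $0$ as $m\to\infty$. Hence $(\xi_{\gamma_n})$ is Cauchy.

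This is the one place where $\alpha=1$ is needed. For $\alpha<1$ the quantity $\rho_o^\alpha$ is still a metric (a snowflake of $\rho_o$), so the same argument would also work. However, the statement only requires the case $\alpha=1$, and convergence of $P_\Gamma(1,o)$ is exactly what Proposition \ref{prop:energy} needs in that case.

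\emph{Step 3: the limit lies in $\Lambda(\Gamma)$.} $\partial_\infty X$ is compact with respect to $\rho_o$, hence complete, so the Cauchy sequence has a limit $\Phi(\omega)\in\partial_\infty X$. Each $\xi_{\gamma_n}$ lies in $\{\gamma_nA,\gamma_nB\}\subset\Lambda(\Gamma)$, since $\Lambda(\Gamma)$ is $\Gamma$-invariant. The limit set is closed, being the set of accumulation points of an orbit. Therefore $\Phi(\omega)\in\Lambda(\Gamma)$.

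\emph{Well-definedness.} The representing sequence of $\omega$ is unique, so $\Phi$ is well defined.

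I do not expect a real obstacle. The only point to verify with some care is that the edges along the ray are distinct, so that the sum along the ray is genuinely bounded by the total energy $E(\mathcal T)$; this holds because the path is a geodesic ray in a tree.
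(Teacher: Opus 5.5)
Your proof is correct and follows the paper's argument. The paper bounds each consecutive distance directly by Lemma \ref{lem:decay} and then uses the tail $\sum_{|\gamma|\ge N}e^{-d(o,\gamma o)}$ of the Poincar\'e series, which is exactly what Proposition \ref{prop:energy} packages; it then concludes by compactness, where you use completeness together with the closedness and $\Gamma$-invariance of $\Lambda(\Gamma)$ (and the distinctness of the ray's edges), points the paper leaves implicit.
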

    
\begin{proof}
    Since the positive Poincar\'e series $\sum_{\gamma\in \Gamma}e^{-d(o,\gamma o)}$ converges, for any $\epsilon>0$, there exists $N\in \N$ such that 
    \[\sum_{|\gamma|\geq N}e^{-d(o,\gamma o)}<\epsilon.\]
    Then for any $i>j\geq N$, we have by triangle inequality that
    \[\rho_o(\xi_{\gamma_i},\xi_{\gamma_j})\leq \sum_{k=i}^{j-1}\rho_o(\xi_{\gamma_k},\xi_{\gamma_{k+1}}).\]
    Apply Lemma \ref{lem:decay}, we have
    \[\rho_o(\xi_{\gamma_i},\xi_{\gamma_j})\leq e^{L+2D}\sum_{k=i}^{j-1}e^{-d(o,\gamma_k o)}\leq e^{L+2D}\sum_{|\gamma|\geq N}e^{-d(o,\gamma o)}<+\infty.\]
    It follows that $\{\xi_{\gamma_n}\}$ is a Cauchy sequence, hence by the compactness of $\Lambda(\Gamma)$, it converges to some $\Phi(\omega)\in \Lambda(\Gamma)$, so the boundary map $\Phi$ is well-defined.
\end{proof}

For the remaining section, we always assume $$P_\Gamma(1,o)=\sum_{\gamma\in \Gamma}e^{-d(o,\gamma o)}<\infty$$ so that the boundary map $\Phi$ is well-defined.

\begin{proposition}\label{prop:surjective}
If $P_\Gamma(1,o)<\infty$, then the boundary map $\Phi:\partial_\infty \mathcal T\to\Lambda(\Gamma)$ is surjective.
\end{proposition}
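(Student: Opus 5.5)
Fix $\lambda\in\Lambda(\Gamma)$ and choose a sequence $g_i\in\Gamma$ with $g_i o\to\lambda$. By Lemma \ref{lem:shadow-conv}, $\xi_{g_i}\to\lambda$. The plan is to find a ray $\omega\in\partial_\infty\mathcal T$ with $\Phi(\omega)=\lambda$ by a compactness (K\"onig's lemma) argument on the locally finite tree $\mathcal T$. Since $\Gamma$ is infinite and discrete, $d(o,g_io)\to\infty$. Finitely many elements have bounded depth, so $|g_i|\to\infty$. For each $i$, let $[1,g_i]$ be the tree geodesic, with vertices $1=\gamma^{(i)}_0,\gamma^{(i)}_1,\dots,\gamma^{(i)}_{|g_i|}=g_i$. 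Because $\mathcal T$ has valence at most $|S|$, a diagonal argument gives a subsequence, still indexed by $i$, and a ray $\omega=(1=\gamma_0,\gamma_1,\dots)$ such that for every $n$, $\gamma^{(i)}_k=\gamma_k$ for all $k\le n$ once $i$ is large.

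The key estimate is a tail bound. Set $\epsilon_N:=e^{L+2D}\sum_{|\gamma|\ge N}e^{-d(o,\gamma o)}$, which tends to $0$ as $N\to\infty$ because $P_\Gamma(1,o)<\infty$. For any tree geodesic segment whose vertices all have depth $\ge N$, Lemma \ref{lem:decay} and the triangle inequality show that the visual distance between the $\xi$-values at its endpoints is at most $\epsilon_N$. This is the same telescoping as in the proof that $\Phi$ is well defined. One point needs care: Lemma \ref{lem:decay} bounds $\rho_o(\xi_\gamma,\xi_{\gamma s})$ by $e^{-d(o,\gamma o)}$ at the endpoint $\gamma$. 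Since $S$ is symmetric, we can write each edge as $[\gamma,\gamma s]$ with either endpoint playing the role of $\gamma$, so we attach the bound to the endpoint of larger depth. Each vertex of depth $\ge N$ is then charged at most once along a geodesic path.

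Now fix $N$ and take $i$ large enough that $\gamma^{(i)}_N=\gamma_N$. The segment $[\gamma_N,g_i]$ of $[1,g_i]$ is a descending path in the rooted tree, so all its vertices have depth $\ge N$. The tail bound gives $\rho_o(\xi_{\gamma_N},\xi_{g_i})\le\epsilon_N$. Likewise, $\rho_o(\xi_{\gamma_N},\Phi(\omega))\le\epsilon_N$ by passing to the limit along $\omega$. Hence
\[
\rho_o(\Phi(\omega),\lambda)\le 2\epsilon_N+\rho_o(\xi_{g_i},\lambda).
\]
Let $i\to\infty$ along the subsequence, using Lemma \ref{lem:shadow-conv}, and then let $N\to\infty$. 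This gives $\Phi(\omega)=\lambda$.

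The main obstacle is the bookkeeping for the depth and the edge orientation: we must check that the paths $[1,g_i]$ really are monotone in depth, and that each edge's bound can be charged to a vertex of depth $\ge N$. Both follow from $\mathcal T$ being rooted at $1$ with depth equal to the tree distance to the root, together with the symmetry of $S$.
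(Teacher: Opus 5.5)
Your proof is correct and follows the same route as the paper. A K\"onig-type diagonal argument in the locally finite tree produces a ray $\omega$ along which the $g_i$ eventually descend. A telescoping tail estimate from Lemma~\ref{lem:decay} and $P_\Gamma(1,o)<\infty$ then puts both $\xi_{g_i}$ and $\Phi(\omega)$ close to $\xi_{\gamma_N}$, and Lemma~\ref{lem:shadow-conv} identifies the limit as $\lambda$.

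The only difference is cosmetic. You bound the path sums directly by the depth-$\geq N$ tail $\epsilon_N$ of the Poincar\'e series, handling the edge orientation explicitly. The paper instead bounds them by the subtree energy $E(\mathcal T_{v_m})$ via Proposition~\ref{prop:energy}.
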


\begin{proof}
For any $\lambda\in\Lambda(\Gamma)$, we choose distinct $\gamma_i\in\Gamma$ such that $\gamma_i o\to\lambda$ in $\overline X=X\cup\partial_\infty X$. The convergence of $P_{\Gamma}(1,o)$ implies that only finitely many group
elements satisfy $d(o,\gamma o)\le R$ for each fixed $R$. Hence $d(o,\gamma_i o)\to\infty$.

Since $\mathcal T$ is locally finite, after passing to a subsequence, the
unique geodesic paths $[1,\gamma_i]$ converge locally to an infinite
rooted geodesic ray
\[
\omega=(1=v_0,v_1,v_2,\ldots).
\]
More precisely, set $v_0=1$. Since $v_0$ has only finitely many
children and there are infinitely many paths $[1,\gamma_i]$, there
exists a child $v_1$ of $v_0$ such that infinitely many of the paths
$[1,\gamma_i]$ contain the edge $[v_0,v_1]$.

Inductively, suppose that $v_0,\ldots,v_n$ have been chosen so that
infinitely many of the paths $[1,\gamma_i]$ contain
\[
[v_0,v_1,\ldots,v_n].
\]
Since $v_n$ has only finitely many children, one of them, denoted by
$v_{n+1}\neq v_{n-1}$, has the property that infinitely many of these paths contain
\[
[v_0,v_1,\ldots,v_n,v_{n+1}].
\]
Continuing inductively gives the desired geodesic ray $\omega$.
After a diagonal subsequence and relabeling, we may moreover assume
that, for every $n$, $\gamma_i$ is a descendant of $v_n$ whenever
$i\geq n$.

We claim that $\lambda=\Phi(\omega)$. For each vertex $v\in \mathcal T$, we denote by $\mathcal T_v$ the descendant subtree rooted at $v$. Since $P_\Gamma(1,o)<\infty$, the total energy $E(\mathcal T)$ is finite according to Proposition \ref{prop:energy}. It follows that
\[E(\mathcal T_v)\to 0, \textrm{ as }|v|\to +\infty.\]
Thus, for any $\epsilon>0$, there exists $m\in \N$ such that
\begin{equation}\label{eq:3}
    \sum_{e\subset \mathcal T_v} \ell(e)<\epsilon,
\end{equation}
whenever $|v|\geq m$. By the definition of $\Phi(\omega)$, there exists $m'\in \N$ such that
\begin{equation}\label{eq:4}
   \rho_o(\xi_{v_i},\Phi(\omega))<\epsilon
\end{equation}
whenever $i\geq m'$. We choose $N=\max\{m, m'\}$, then for any $n\geq N$, since $\gamma_n$ is a descendant of $v_{m}$, the geodesic path $[v_{m}, \gamma_n]$ lies in $\mathcal T_{v_{m}}$, thus by \eqref{eq:3} together with the triangle inequality, we have
\[\rho_o(\xi_{\gamma_n},\xi_{v_{m}})\leq \sum_{e\subset [v_{m}, \gamma_n]}\ell (e)\leq \sum_{e\subset \mathcal T_{v_{m}}}\ell(e)<\epsilon.\]
Combining with \eqref{eq:4}, we have for all $n\geq N$ that
\[\rho_o(\xi_{\gamma_n}, \Phi(\omega))\leq \rho_o(\xi_{\gamma_n}, \xi_{v_m})+\rho_o(\xi_{v_m}, \Phi(\omega))<2\epsilon.\]
This shows 
$\lim_{n\to \infty}\xi_{\gamma_n} =\Phi(\omega)$. Hence by Lemma \ref{lem:shadow-conv} we have $\lambda=\Phi(\omega)$. This shows that $\Phi$ is surjective.
\end{proof}

\subsection{Hausdorff measures}
For each $m\in \N$, we consider the finite cover of $\partial_\infty \mathcal T$ by
\[\bigcup_{|v|=m}\partial_\infty\mathcal T_v.\]
Since $\Phi$ is surjective (by Proposition \ref{prop:surjective}), its image
\[\bigcup_{|v|=m}\Phi(\partial_\infty\mathcal T_v)\]
is a finite cover of $\Lambda(\Gamma)$. We now investigate the diameter for each $\Phi(\partial_\infty\mathcal T_v)$.
\begin{proposition}\label{prop:diam}
Suppose $P_\Gamma(1,o)<\infty$, then for any vertex $v\in \mathcal T$ we have
  \[\diam(\Phi(\partial_\infty\mathcal T_v))\leq 2E(\mathcal T_v),\]
where $\mathcal T_{v}$ is the subtree at vertex $v$.
\end{proposition}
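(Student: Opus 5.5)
The plan is to compare any two points of $\Phi(\partial_\infty\mathcal T_v)$ with the single point $\xi_v$, using the descending paths in $\mathcal T_v$ and the triangle inequality for $\rho_o$. Take $\omega,\omega'\in\partial_\infty\mathcal T_v$. Their representing rays from $1$ pass through $v$, and beyond $v$ they continue as rays $(v=u_0,u_1,u_2,\dots)$ and $(v=u_0',u_1',u_2',\dots)$ contained in $\mathcal T_v$. By the definition of $\Phi$ we have $\xi_{u_n}\to\Phi(\omega)$ and $\xi_{u_n'}\to\Phi(\omega')$ in the visual metric.

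The key estimate is a one-sided bound. For each $n$, the triangle inequality along the path $[v,u_n]$ gives
\[
\rho_o(\xi_v,\xi_{u_n})\leq \sum_{k=0}^{n-1}\rho_o(\xi_{u_k},\xi_{u_{k+1}})=\sum_{e\subset[v,u_n]}\ell(e)\leq E(\mathcal T_v).
\]
The last step holds because the path consists of distinct edges of $\mathcal T_v$ and every edge density is nonnegative. Letting $n\to\infty$ and using continuity of $\rho_o$, we obtain $\rho_o(\xi_v,\Phi(\omega))\leq E(\mathcal T_v)$. The same bound holds for $\omega'$.

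Combining the two bounds gives
\[
\rho_o(\Phi(\omega),\Phi(\omega'))\leq \rho_o(\Phi(\omega),\xi_v)+\rho_o(\xi_v,\Phi(\omega'))\leq 2E(\mathcal T_v).
\]
Taking the supremum over $\omega,\omega'$ yields the stated diameter bound. The hypothesis $P_\Gamma(1,o)<\infty$ enters only to guarantee, through Proposition \ref{prop:energy} and the preceding proposition, that $\Phi$ is well defined and that $E(\mathcal T_v)<\infty$. If $E(\mathcal T_v)$ were infinite the bound would in any case be trivial.

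There is no serious obstacle. The one point to handle carefully is the passage to the limit: the edge sum along $[v,u_n]$ is bounded by the full subtree energy uniformly in $n$, so the bound survives the limit. One could instead argue through the common ancestor of $u_n$ and $u_n'$, but routing both rays through $v$ is simpler and yields the factor $2$ directly.
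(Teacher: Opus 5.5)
Your proposal is correct and follows essentially the same route as the paper. The paper likewise bounds $\rho_o(\xi_{u_i},\xi_v)$ by the edge sum along $[v,u_i]$, and hence by $E(\mathcal T_v)$. It then passes to the limit to get $\rho_o(\Phi(\omega),\xi_v)\leq E(\mathcal T_v)$ for each ray, and concludes with the triangle inequality through $\xi_v$.
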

\begin{proof}
    Let $\Phi(\omega_1),\Phi(\omega_2)$ be two arbitrary element in $\Phi(\partial_\infty\mathcal T_v)$, where $\omega_1,\omega_2\in \partial_\infty\mathcal T_v$ are expressed by
    \[\omega_1=(v=u_0,u_1,u_2,\cdots),\quad \omega_2=(v=w_0,w_1,w_2,\cdots).\]
    Since all $\{u_i\}, \{w_i\}$ are descendants of $v$, we have
    \[\rho_o(\xi_{u_i},\xi_v)\leq \sum_{e\subset [v,u_i]}\ell(e) \leq E(\mathcal T_v),\]
    and similarly,
    \[\rho_o(\xi_{w_i},\xi_v)\leq E(\mathcal T_v).\]
    Passing to the limit $i\to \infty$, we have that
    \[\rho_o(\Phi(\omega_1),\xi_v)\leq E(\mathcal T_v) \textrm{ and }\rho_o(\Phi(\omega_2),\xi_v)\leq E(\mathcal T_v).\]
    By the triangle inequality, we have
    $\rho_o(\Phi(\omega_1),\Phi(\omega_2))\leq 2E(\mathcal T_v)$. Hence the proposition follows.
\end{proof}

Now we are ready to finish our proof of Theorem \ref{thm:main}.
\subsection*{Proof of Theorem \ref{thm:main}:} Assuming $P_\Gamma(\alpha,o)<\infty$ holds for some $\alpha \in (0,1]$, then certainly $P_\Gamma(1,o)<\infty$ holds, hence all results (in Section $\ref{sec:proof}$) holds up to now. If follows from Proposition \ref{prop:diam} that for each $m>0$, the total $\alpha$-volume of the cover
\[\bigcup_{|v|=m}\Phi(\partial_\infty\mathcal T_v)\]
satisfies
\begin{equation}\label{eq:volume}
    \sum_{|v|= m}\diam(\Phi(\partial_\infty\mathcal T_v))^\alpha\leq \sum_{|v|= m} 2^\alpha E(\mathcal T_v)^\alpha\leq 2\sum_{|v|= m}  E_\alpha(\mathcal T_v),
\end{equation}
where the last inequality uses the elementary subadditivity inequality that
\[\left(\sum_{i}a_i\right)^\alpha\leq \sum_{i}a_i^\alpha\]
for any positive $a_i$ and $\alpha\in (0,1]$.

Now since $P_\Gamma(\alpha,o)<\infty$, by Proposition \ref{prop:energy}, $E_\alpha(\mathcal T)$ is finite. Thus, we have
\[\sum_{|v|=m}E_\alpha(\mathcal T_v)\to 0\]
as $m\to \infty$. Thus, \eqref{eq:volume} shows that as $m\to \infty$,
 $$\sum_{|v|= m}\diam(\Phi(\partial_\infty\mathcal T_v))^\alpha\to 0.$$

On the other hand, since
\[\sum_{|v|=m}E(\mathcal T_v)\to 0,\]
we have
\[\max_{|v|=m}\diam(\Phi(\partial_\infty\mathcal T_v))\to 0\]
using Proposition \ref{prop:energy}. This shows that $\mathcal H^\alpha(\Lambda(\Ga))=0$. Consequently, $
\dim_{\mathcal H}\Lambda(\Gamma)\leq \delta(\Gamma).$

Since $\Gamma$ is non-elementary, the generalized Bishop--Jones
theorem (see \cite[Theorem~1.2.1]{DSU17}) gives
\[
\delta(\Gamma)
=
\dim_{\mathcal H}\Lambda_c(\Gamma).
\]
Since $\Lambda_c(\Gamma)\subseteq\Lambda(\Gamma),
$
we obtain
\[
\delta(\Gamma)
=
\dim_{\mathcal H}\Lambda_c(\Gamma)
\leq
\dim_{\mathcal H}\Lambda(\Gamma).
\]
Combining the two inequalities yields
\[
\dim_{\mathcal H}\Lambda(\Gamma)=\delta(\Gamma).
\]

Note that $\delta(\Gamma)<1$, it follows that
$\mathcal H^1\bigl(\Lambda(\Gamma)\bigr)=0.
$
Therefore, any connected subset of $\Lambda(\Ga)$ has diameter zero, and so it must be a singleton set. This shows $\Lambda(\Ga)$ is totally disconnected.

Finally, since $X$ is proper, its visual boundary
$\partial_\infty X$ is compact, and hence $\Lambda(\Gamma)$ is
compact. Since $\Gamma$ is non-elementary, its limit set is perfect.
Thus $\Lambda(\Gamma)$ is a nonempty, compact, perfect, totally
disconnected metrizable space, so it must be homeomorphic to the Cantor set.

\section{Proof of Theorem \ref{thm:gf}}
\subsection{Freeness} We first show that $\Ga$ must be free using Stalling's theorem. The key property is the following splitting lemma.

\begin{lem}\label{lem:split}
    If $\Gamma<\Isom(X)$ is a finitely generated torsion-free subgroup whose limit set is a Cantor set, then $\Ga$ admits a non-trivial free splitting, i.e., there exist infinite groups $H,K$ such that $\Ga=H\ast K$.
\end{lem}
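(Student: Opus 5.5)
We want to prove Lemma \ref{lem:split}: a finitely generated, torsion-free $\Ga$ whose limit set is a Cantor set splits as $H\ast K$ with $H,K$ infinite. The plan is to produce a nontrivial splitting of $\Ga$ over a finite subgroup, which torsion-freeness forces to be trivial, and to get that splitting by showing $\Ga$ has more than one end via Stallings' theorem.

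The limit set alone does not determine the ends of $\Ga$ when $\Ga$ is not convex cocompact. So the first step is to realize $\Ga$ as a convergence group. The action of $\Ga$ on $\Lambda(\Ga)$ is a minimal convergence action on a compact metrizable space. This follows from discreteness together with the standard north–south dynamics of $\CAT(-1)$ isometries on $\partial_\infty X$.

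The second step is to invoke Bowditch's work on convergence groups acting on Cantor sets \cite{BowditchCantor}. The relevant statement is that if a finitely generated group acts as a minimal convergence group on a Cantor set, then it splits nontrivially over a finite or parabolic subgroup, or more precisely over a subgroup stabilizing a point or a finite subset. The idea is that a totally disconnected limit set can be cut by a clopen partition $\Lambda=U\sqcup V$. The corresponding almost-invariant sets in $\Ga$, namely $\{\gamma:\gamma o \text{ near } U\}$, then give $\Ga$ a nontrivial end relative to a peripheral structure.

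Concretely, I would pick a clopen set $U\subset\Lambda(\Ga)$ with $U$ and its complement both nonempty. I would then consider $A=\{\gamma\in\Ga: \xi_\gamma\in U\}$, using the projection map $\xi$ of Section \ref{sec:proof}. Next I would show that $A\,\triangle\, As$ is finite for each generator $s$. This holds because $\rho_o(\xi_\gamma,\xi_{\gamma s})\le e^{L+2D}e^{-d(o,\gamma o)}$ by Lemma \ref{lem:decay}, while $U$ and $\Lambda\setminus U$ lie at positive distance apart; Lemma \ref{lem:shadow-conv} also enters here. Both $A$ and its complement are infinite, since both clopen pieces contain limit points and therefore contain $\xi_\gamma$ for infinitely many $\gamma$ (again by Lemma \ref{lem:shadow-conv}). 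Hence $\Ga$ has more than one end.

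There is a gap in the finite-symmetric-difference step. The set $U$ must be compared with $\xi_\gamma$, which lies in $\Lambda$, so the argument needs $\xi_\gamma$ and $\xi_{\gamma s}$ to lie on the same side whenever $d(o,\gamma o)$ is large. This holds because $\rho_o(U,\Lambda\setminus U)>0$ by compactness and $d(o,\gamma o)\to\infty$ along $\Ga$ by discreteness. I expect this to be the main obstacle to verify carefully, although it looks routine.

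With more than one end established, Stallings' theorem shows that $\Ga$ splits over a finite subgroup. Since $\Ga$ is torsion-free, that subgroup is trivial. So $\Ga=H\ast K$ with $H,K$ nontrivial, and hence infinite because $\Ga$ is torsion-free. This approach in fact bypasses \cite{BowditchCantor}, using only the energy and projection machinery already set up in the paper.
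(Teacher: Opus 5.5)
Your proposal is correct and is essentially the paper's proof. You pull back a clopen partition of $\Lambda(\Ga)$ via $\xi$ and bound the crossing edges using Lemma \ref{lem:decay} together with proper discontinuity; the paper's bound $d(o,\gamma o)\le L+2D-\ln\epsilon_0$ is exactly the ``gap'' you flag and then close. You get infinitude of both sides from Lemma \ref{lem:shadow-conv}, and conclude by Stallings' theorem and torsion-freeness. The convergence-group and Bowditch preliminaries are not needed. In the HNN case of Stallings you should add that $\Ga\not\cong\mathbb Z$, since its limit set is infinite, so $\Ga=H\ast\mathbb Z$ with $H$ nontrivial; the paper also leaves this point implicit.
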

\begin{proof}
    We first construct an essential cut on $\Cay (\Ga)$. Since $\Lambda(\Ga)$ is a Cantor set, we can choose a proper clopen subset $A\subset \Lambda(\Ga)$. Then $\Lambda(\Ga)=A\cup A^c$ is a decomposition into two compact sets with positive visual distance, and we denote $\epsilon_0=\rho_o(A,A^c)>0$. Pulling back the decomposition by the projection map $\xi:\Ga\to \Lambda(\Ga)$, if we denote
    $$C_1=\{\gamma\in \Ga\;|\;\xi_\gamma\in A\}\textrm{ and } C_2=\{\gamma\in \Ga\;|\;\xi_\gamma\in A^c\},$$ then $\Gamma=C_1\cup C_2$ gives a vertex decomposition on $\Cay(\Ga)$. We claim this gives an essential cut of $\Cay(\Ga)$.

    First, we notice that both $C_1, C_2$ are infinite. Indeed, we can choose an element $\lambda\in A$, since it's a limit point, there exists $\gamma_i\in \Ga$ such that $\gamma_io\to \lambda$. By Lemma \ref{lem:shadow-conv}, we have $\xi_{\gamma_i}\to \lambda$. Since $A$ is open, for all large enough $i$, we have $\xi_{\gamma_i}\in A$, hence $C_1$ is infinite. Similarly, $C_2$ is also infinite. 

    Next, we show the total number of edges between $C_1$ and $C_2$ in $\Cay(\Ga)$ is finite. Suppose $s\in S$ is an edge connecting $\gamma$ to $\gamma s$ such that $\gamma$ and $\gamma s$ belong to the different set of $C_1$ and $C_2$. By the definition of $C_1, C_2$, we have $\rho_o(\xi_{\gamma},\xi_{\gamma s})\geq \epsilon_0$. According to Lemma \ref{lem:decay}, such $\gamma$ must satisfy
    $e^{-d(o,\gamma o)}\geq e^{L+2D}\epsilon_0$, or that
    \[d(o,\gamma o)\leq L+2D-\ln(\epsilon_0).\]
    So there is only finitely many choices of $\gamma$, hence also of the pair $(\gamma,s)$ since $\Ga$ is finitely generated.

    The existence of the essential cut on $\Cay(\Ga)$ shows that $\Ga$ has at least two ends. By Stallings' theorem \cite{Stallings68}, $\Ga$ is either an amalgamated free product over a finite group, or an HNN-extension over a finite group. Since $\Ga$ is torsion-free, these finite groups must be trivial; hence in both cases, $\Ga$ has a non-trivial free splitting.
\end{proof}

For any non-elementary subgroup $H<\Ga$, its limit set is an infinite compact totally disconnected perfect subset of $\Lambda(\Ga)$, hence also a Cantor set. So we can repeat the splitting process as described in Lemma \ref{lem:split}. Since $\Ga$ is finitely generated, it has finite free rank, so the splitting process will stop at a finite stage. This shows that there is a free splitting
\begin{equation}\label{eq:split}
    \Ga=\Ga_1\ast \Ga_2\ast \cdots\ast \Ga_k
\end{equation}
such that each $\Ga_i$ is an elementary subgroup of $\Isom(X)$.

For torsion-free groups acting on a $\CAT(-1)$ space, the elementary subgroups are either of hyperbolic or parabolic type. Hyperbolic types are isomorphic to $\mathbb Z$, while the parabolic types are a priori more complicated. However, under the assumption that the critical exponent is $<1$, one can show that the parabolic subgroups also must be isomorphic to $\mathbb Z$.

\begin{lem}
    Let $P<\Isom(X)$ be a finitely generated, torsion-free discrete parabolic subgroup with $\delta(P)<1$, then $P\cong \mathbb Z$.
\end{lem}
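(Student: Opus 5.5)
The plan is to use the freeness machinery already developed in this section, applied to $P$ itself. First I would pin down the limit set of $P$. Since $P$ is a discrete parabolic subgroup, it fixes a unique point $\zeta\in\partial_\infty X$, and $\Lambda(P)=\{\zeta\}$. In particular $P$ is elementary, so Lemma~\ref{lem:split} cannot be applied as stated. I would therefore argue directly with the projection map $\xi:P\to\partial_\infty X$ and the energy estimates, which never used non-elementarity in an essential way. The only place it entered was the choice of two limit points $A,B$; for $P$ I would instead take $A$ to be any point of $\partial_\infty X$ other than $\zeta$ and put $B=\zeta$.

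The key step is to show that $P$ has at least two ends, using the hypothesis $\delta(P)<1$, so that $P_P(1,o)<\infty$. I would cut $\Cay(P,S)$ with an essential cut of the kind used in Lemma~\ref{lem:split}, built from boundary geometry. Every orbit point $\gamma o$ lies on the geodesic from $\gamma A$ to $\zeta$, and the points $\gamma A$ accumulate only at $\zeta$. Since $X$ is proper, I would instead work with the horosphere (Busemann) coordinate at $\zeta$. By Lemma~\ref{lem:decay} the ``jumps'' $\rho_o(\xi_\gamma,\xi_{\gamma s})$ are summable. Together with the spanning tree $\mathcal T$ and Proposition~\ref{prop:diam}, this should show that for all $m$ large, the sets $\{\gamma: |\gamma|\ge m\}$ split into classes with finitely many edges between them. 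A cleaner alternative, which I would try first, is a growth argument. A torsion-free finitely generated group with a proper isometric action fixing $\zeta$ and preserving horospheres is amenable; for discrete parabolic groups of a proper $\CAT(-1)$ space, finitely generated ones are virtually nilpotent (the standard Margulis-type statement, as in Bowditch). A torsion-free finitely generated nilpotent group of Hirsch length $k\ge 2$ contains $\mathbb Z^2$. So it suffices to rule out $\mathbb Z^2\le P$.

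To rule out $\mathbb Z^2$, I would use the $\CAT(-1)$ comparison for parabolic displacement. For an element $g$ in a parabolic group, $d(o,g^n o)\le 2\log n + C$, with the constant depending on $g$. This follows by comparing $d(o,g^n o)$ along the horosphere with the ray to $\zeta$: the ray is exponentially contracted, so a horospherical distance of order $n$ costs only $2\log n$ in $X$. Applying this to a generating pair $a,b$ of a $\mathbb Z^2$ subgroup, and to the words $a^ib^j$ of word length $\le n$, I would get $d(o,a^ib^jo)\le 2\log n+C$ for about $n^2$ elements. Then
\[
\sum_{\gamma\in P}e^{-d(o,\gamma o)}\gtrsim\sum_n n\cdot n^{-2}=\infty,
\]
which forces $\delta(P)\ge 1$, a contradiction. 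Hence $P$ is virtually $\mathbb Z$ and torsion-free, so $P\cong\mathbb Z$, since torsion-free virtually cyclic groups are cyclic.

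The main obstacle is the estimate $d(o,\gamma o)\le 2\log|\gamma|_S + C$ uniformly over $\gamma\in P$. Proving it for each single element is not enough; it must hold for the whole word metric. My approach would be to use the horoball geometry. Let $h_t$ be the point on the horosphere through $o$ at depth $t$ along the ray to $\zeta$. Then the generators move $c_\zeta(t)$ by at most $e^{-t}L$, by $\CAT(-1)$ comparison with $\HH^2$. So an element of word length $n$ moves $c_\zeta(t)$ by at most $ne^{-t}L$. Choosing $t=\log n$ gives $d(o,\gamma o)\le 2\log n+L+2$. This uniform bound also avoids needing virtual nilpotency: it gives $P_P(1,o)\gtrsim\sum_n \#\{|\gamma|_S=n\}/n^2$. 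Convergence then forces at most linear-type growth, and a group whose growth is $o(n^2)$ summably is virtually $\mathbb Z$ by Gromov's theorem, or by the elementary classification of groups of linear growth.
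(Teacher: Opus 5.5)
Your final route is essentially the paper's proof: the uniform bound $d(o,\gamma o)\le 2\log|\gamma|_S+C$ comes from pushing $o$ up the ray to the fixed point and using $\CAT(-1)$ comparison, followed by counting against the Poincar\'e series and Gromov's theorem (you test convergence at exponent $1$ instead of bounding $|B_S(n)|\lesssim n^{2\delta(P)}$, and once Gromov gives virtual nilpotency, your $\mathbb Z^2$ count can replace Bass--Guivarc'h). Two slips to fix: the comparison constant is $2\sinh(L/2)$ rather than $L$ (already in $\HH^2$ a parabolic gives $d(c(t),s\,c(t))=2\sinh^{-1}(\sinh(L/2)e^{-t})>Le^{-t}$ for $t>0$), and the Margulis-type virtual nilpotency or amenability of parabolic subgroups you first invoke is false for general proper $\CAT(-1)$ spaces (a surface group acts parabolically on the $\CAT(-1)$ warped product $\mathbb R\times_{e^t}\HH^2$), so it must come from Gromov's theorem via the growth bound, as in the version you end with.
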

\begin{proof}
The proof is similar to \cite[Proposition 2.3]{LiuWang}. We denote $\xi\in \partial_\infty X$ the unique fixed point of $P$, and $\mathcal H$
be the horosphere at $\xi$ through $o$. Then $P$ perserves $\mathcal H$. Let
\[c:[0,\infty)\to X\]
be the geodesic ray from $o$ to $\xi$.

Let $S$ be a symmetric generating set of $P$, and $L=\max\{d(o,so)\;|\;s\in S\}$. For each $s\in S$, by the comparison ideal triangle $\Delta(o,so,\xi)$, we have 
\[d\left(c(t),s\cdot c(t)\right)\leq Ce^{-t},\]
for some constant $C$ depends on $L$. Then for each $n\in \N$, and any $p\in P$ with word length $\leq n$, we have by triangle inequality that
\[d(c(t),p\cdot c(t))\leq Cn e^{-t},\;\forall t\geq 0.\]
Applying the triangle inequality once more, we have for any $n\in \N$ and $t\geq 0$,
\begin{align*}
    d(o,po)&\leq d(o,c(t))+d(c(t),p\cdot c(t))+d(p\cdot c(t),po)\\
    &\leq 2t+Cne^{-t}.
\end{align*}
We set $t=\log n$, then the above estimate turns to
\[d(o,po)\leq 2\log n+C.\]
Since the action of $P$ on $X$ is free, this shows that the number of elements whose length is $\leq n$ in the Cayley graph of $P$ is controlled by
\[|B_S(n)|\leq |\{p\in P\;:\;d(o,po)\leq 2\log n+C\}|,\]
and according to the definition of $\delta(P)$, the latter has asymptotic growth
\[\lesssim e^{\delta(P)(2\log n+C)}\lesssim O(n^{2\delta(P)}).\]
Thus, using $\delta(P)<1$, we see that $P$ has polynomial growth 
of degree $<2$. Hence, by Gromov's theorem \cite{Gromov} together with the Bass-Guivarc’h formula \cite{Bass72, Guivarch73}, $P$ must be virtually $\mathbb Z$. Since it's torsion-free, it must be $\mathbb Z$.
\end{proof}

Therefore, by the above lemma, each $\Ga_i$ factor in \eqref{eq:split} must be free-cyclic, thus $\Ga$ is free.

\subsection{Geometric finiteness}
Now since $\Ga$ is a finitely generated free group, it fits into Bowditch's general framework of an almost finitely presented group acting as a minimal convergence group on a Cantor set. By \cite[Theorem 1.3]{BowditchCantor}, there is an associated Bass-Serre tree $T$ where $\Ga$ acts on, such that 
\begin{enumerate}
    \item[$\bullet$] the action is cofinite,
    \item[$\bullet$] every edge stabilizer is finite, hence trivial,
    \item[$\bullet$] there is a $\Ga$-equivariant homeomorphism from the ``boundary'' $\Delta_0T$ to $\Lambda(\Ga)$.
\end{enumerate}
Here $\Delta_0T$ consists of the visual boundary of $T$ together with all vertices of infinite degree. As explained in \cite[Section 6]{BowditchCantor}, the action of $\Ga$ on $\Delta_0T$ is geometrically finite, where the conical limit points correspond to the visual boundary of $T$, and the bounded parabolic points correspond to infinite degree vertices of $T$. Therefore, $\Ga$ is geometrically finite.

\end{document}